\def\l{\left}
\def\r{\right}
\def\bg{\bigg}
\def\({\bg(}
\def\){\bg)}
\def\f{\frac}
\def\eq{\equiv}
\def\1{{\bf 1}}
\theoremstyle{plain}
\newtheorem{theorem}{Theorem}[section]
\newtheorem{lemma}{Lemma}
\theoremstyle{definition}
\newtheorem*{Acks}{Acknowledgments}
\newtheorem*{Remark}{Remark}
\theoremstyle{Conjecture}
\def\<{\langle}
\def\>{\rangle}
\begin{document}
\title{On some conjectural congruences}

\begin{abstract}
In this paper, we confirm some congruences conjectured by V.J.W. Guo and M.J. Schlosser recently. For example, we show that for primes $p>3$,
$$
\sum_{k=0}^{p-1}(2pk-2k-1)\f{\l(\f{-1}{p-1}\r)_k^{2p-2}}{(k!)^{2p-2}}\eq0\pmod{p^5}.
$$
\end{abstract}
\author{Chen Wang}
\address{Department of Mathematics, Nanjing University, Nanjing 210093,
People's Republic of China}
\email{cwang@smail.nju.edu.cn}

\thanks{2010 {\it Mathematics Subject Classification}. Primary 11B65; Secondary 05A10, 11A07, 11B68.
\newline\indent {\it Keywords}. Congruences, binomial coeffients, Bernoulli numbers.
\newline \indent This work was supported by the National Natural Science Foundation of China (grant no. 11571162).}
\maketitle

\section{Introduction}
\setcounter{lemma}{0} \setcounter{theorem}{0}
\setcounter{equation}{0}
In 2015, motivated by the well-known formula
$$
\lim_{n\rightarrow\infty}\l(1+\f{1}{n}\r)^n = e,
$$
Z.-W. Sun \cite{Su2} established some new congruences modulo prime powers. For example, he showed that for any prime $p>3$,
\begin{equation}\label{eq1}
\sum_{k=0}^{p-1}\binom{\f{-1}{p+1}}{k}^{p+1}\eq0\pmod{p^5}
\end{equation}
and
\begin{equation}\label{eq2}
\sum_{k=0}^{p-1}\binom{\f{1}{p-1}}{k}^{p-1}\eq\f{2}{3}p^4B_{p-3}\pmod{p^5},
\end{equation}
where $B_0,B_1,B_2,\ldots$ are the well-known Bernoulli numbers (cf. \cite{IR}).

Recently, V.J.W. Guo and M.J. Schlosser \cite{Guo} studied some interesting $q$-congruences for truncated basic hypergeometric series with the base being an even power of $q$. In their paper, as a corollary, they obtained that for any odd prime $p$,
$$
\sum_{k=0}^{p-1}k\cdot\f{\l(\f{1}{p+1}\r)_k^{p+1}}{(k!)^{p+1}}\eq0\pmod{p^3},
$$
where $(x)_k=x(x+1)\cdots(x+k-1)$ denotes the Pochhammer symbol. Note that
$$
\f{\l(\f{1}{p+1}\r)_k}{k!} = (-1)^k\binom{\f{-1}{p+1}}{k}.
$$
Thus this corollary is very similar to \eqref{eq1}. To refine this corollary, Guo and Schlosser conjectured that under the same condition,
\begin{equation}\label{conj1}
\sum_{k=0}^{p-1}k\cdot\f{\l(\f{1}{p+1}\r)_k^{p+1}}{(k!)^{p+1}}\eq\f{p^3}{4}\pmod{p^4}.
\end{equation}

We shall prove \eqref{conj1} by establishing the following extension.
\begin{theorem}\label{theorem1}
Let $p>5$ be a prime. Then
$$
\sum_{k=0}^{p-1}k\cdot\f{\l(\f{1}{p+1}\r)_k^{p+1}}{(k!)^{p+1}}\eq\f{p^3}{4}-\f{p^4}{8}+\f{3}{16}p^5-\f{1}{36}pB_{p-3}\pmod{p^6}.
$$
\end{theorem}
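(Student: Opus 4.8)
The plan is to convert everything into a $p$-adic expansion whose coefficients are harmonic sums, and then to evaluate those sums with classical Bernoulli-number congruences. First I would use the identity $\f{(1/(p+1))_k}{k!}=(-1)^k\binom{-1/(p+1)}{k}$ recorded above, together with the fact that $p+1$ is even, to rewrite the summand as $\binom{-1/(p+1)}{k}^{p+1}$; thus the object to compute is $T:=\sum_{k=0}^{p-1}k\binom{-1/(p+1)}{k}^{p+1}$. Writing $b_k=\binom{-1/(p+1)}{k}$ and clearing the denominator $p+1$, a direct computation gives, for $1\ls k\ls p-1$, the factorisation
$$(-1)^kb_k=(1+p)^{-k}\prod_{j=1}^{k-1}\l(1+\f{jp}{j+1}\r),$$
each factor of which is $\eq1\pmod p$; in particular $(-1)^kb_k\in1+p\Z_p$ and $b_k$ is a $p$-adic unit.

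Next I would take the $p$-adic logarithm of this product. Since $\log(1+p)$ and $\log\l(1+\f{jp}{j+1}\r)$ are standard power series in $p$, collecting powers of $p$ and using the identity $\sum_{j=1}^{k-1}\f{j^m}{(j+1)^m}=k+\sum_{r=1}^m\binom{m}{r}(-1)^rH_k^{(r)}$ (with $H_k^{(r)}=\sum_{i=1}^k i^{-r}$) yields
$$\log\l((-1)^kb_k\r)=\sum_{m\gs1}\f{(-1)^m}{m}\l(\sum_{r=1}^m\binom{m}{r}(-1)^{r-1}H_k^{(r)}\r)p^m,$$
whose leading coefficient is $-H_k$. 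Multiplying by $p+1$ and exponentiating then gives an expansion $b_k^{p+1}=1+\sum_{m=1}^{5}f_m(k)\,p^m+O(p^6)$, in which each $f_m(k)$ is an explicit polynomial with $p$-integral coefficients in $H_k^{(1)},\cs,H_k^{(m)}$; for instance $f_1=-H_k$ and $f_2=\f12H_k^2-\f12H_k^{(2)}$.

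I would then substitute this expansion into $T$ and interchange summations, reducing $T\pmod{p^6}$ to a finite combination of sums $\sum_{k=1}^{p-1}k\cdot(\text{monomial in the }H_k^{(r)})$. Each such sum is handled by the elementary identity $\sum_{k=i}^{p-1}k=\f{(p-1)p}{2}-\f{(i-1)i}{2}$, which turns it into a combination of the global harmonic sums $H_{p-1}^{(r)}$ and lower-weight pieces. Finally I would insert the classical Wolstenholme-type congruences, such as $H_{p-1}\eq-\f13p^2B_{p-3}\pmod{p^3}$ and $H_{p-1}^{(2)}\eq\f23pB_{p-3}\pmod{p^2}$ to the requisite precision, and collect terms. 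A useful internal check is that the leading pieces combine as $\f{p^2(p-1)}{4}\l(1-2H_{p-1}\r)$, so that $T$ begins at order $p^3$, matching \eqref{conj1} modulo $p^4$.

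The main obstacle will be the harmonic-sum bookkeeping. Because of the cancellations just noted (the naive order-$p$ and order-$p^2$ contributions disappear), $b_k^{p+1}$ genuinely has to be expanded to order $p^5$, and every weight-$\ls5$ sum $\sum_{k=1}^{p-1}kH_k^{(r_1)}\cdots H_k^{(r_s)}$ must be pinned down to the correct power of $p$; it is precisely in these auxiliary evaluations that $B_{p-3}$ enters. Assembling and proving this table of congruences is the technical heart of the argument, and once it is in place the final combination modulo $p^6$ is a routine, if lengthy, calculation.
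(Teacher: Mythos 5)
Your $p$-adic expansion of $b_k^{p+1}$ is correct as far as it goes: the factorisation of $(-1)^kb_k$, the logarithmic identity, and the coefficients $f_1=-H_k$, $f_2=\f12H_k^2-\f12H_k^{(2)}$ all check out, and in spirit your route is the paper's (expand the summand in powers of $p$ with harmonic-sum coefficients, then evaluate the weighted sums by Bernoulli-number congruences). But you omit the one structural device that makes the computation close at the stated precision: the paper never expands from $1$. It subtracts $(-1)^k\binom{p-1}{k}=\prod_{j=1}^k(1-p/j)$, whose weighted sum $\sum_{k=0}^{p-1}(-1)^kk\binom{p-1}{k}$ vanishes \emph{identically} (Lemma 2.2, since $1<p-1$), and then quotes Sun's expansion (Lemma 2.1) of the difference, which begins at order $p^3$. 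Consequently every harmonic sum in the paper is needed only modulo $p^3$, $p^2$ or $p$, which is exactly what Lemmas 2.3--2.4 supply.

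In your version the terms $m=0,1,2$ survive individually, so you need $\sum_{k}kH_k\pmod{p^5}$ and $\sum_k kH_k^2,\ \sum_k kH_k^{(2)}\pmod{p^4}$. Via $\sum_k kH_k=\f{p(p-1)}{2}H_{p-1}-\f{(p-1)(p-2)}{4}$ this forces knowledge of $H_{p-1}\pmod{p^4}$, and similarly $H_{p-1}^{(2)}\pmod{p^3}$ together with mod-$p^4$ evaluations of the weight-two sums --- strictly beyond the ``classical'' inputs you list ($H_{p-1}\pmod{p^3}$, $H_{p-1}^{(2)}\pmod{p^2}$). The mod-$p^4$ value of $H_{p-1}$ involves $B_{2p-4}$ and Kummer-type reductions (Z.-H. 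Sun), so your plan is salvageable in principle, but the toolkit you actually invoke cannot reach mod $p^6$. Your internal check is also off by one order: $\f{p^2(p-1)}{4}(1-2H_{p-1})$ begins at order $p^2$, not $p^3$; the $-p^2/4$ is cancelled only by the weight-two contribution $\f{p^2}{2}\sum_k k(H_k^2-H_k^{(2)})\eq\f{p^2}{4}\pmod{p^3}$, so the order-$p^3$ start of $T$ is not visible from the displayed pieces alone. The fix is cheap: since your $f_1,f_2$ already coincide with the corresponding coefficients of $\prod_{j=1}^k(1-p/j)$, write $b_k^{p+1}=(-1)^k\binom{p-1}{k}+(\text{corrections of order }p^3)$ and use the vanishing of $\sum_k(-1)^kk\binom{p-1}{k}$; then all the high-precision sums drop out and your computation reduces to the paper's Lemmas 2.3--2.4.
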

\begin{Remark}
We can directly verify that \eqref{conj1} holds for $p=5$.
\end{Remark}

Guo and Schlosser also posed some other conjectures which are similar to \eqref{conj1}. Here we shall confirm two of these conjectures.
\begin{theorem}\cite[Conjecture 2]{Guo}\label{theorem2}
Let $r$ be a positive integer and let $p$ be a prime with $p>2r+1$. Then
$$
\sum_{k=0}^{p-1}k^r\l(k+\f{1}{p+1}\r)^r\cdot\f{\l(\f{1}{p+1}\r)_k^{p+1}}{(k!)^{p+1}}\eq0\pmod{p^4}.
$$
\end{theorem}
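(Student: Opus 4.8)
The plan is to expand the summand $p$-adically in terms of harmonic sums and then reduce the claim to congruences for power sums. Write $a=\frac{1}{p+1}$, so that $(p+1)a=1$, and set $B_k=\frac{(a)_k^{p+1}}{(k!)^{p+1}}$. Since $\frac{(a)_k}{k!}=\prod_{i=1}^{k}\frac{i-1+a}{i}=\prod_{i=1}^{k}\bigl(1-\tfrac{pa}{i}\bigr)$ (using $1-a=pa$), we have $B_k=\prod_{i=1}^{k}(1-pa/i)^{p+1}$; in particular the $k=0$ summand vanishes, so the sum runs over $1\le k\le p-1$. Taking logarithms and using $(p+1)a=1$ yields the clean expansion
$$
\log B_k=-(p+1)\sum_{m\ge1}\frac{p^ma^m}{m}H_k^{(m)}=-pH_k-\frac{p^2a}{2}H_k^{(2)}-\frac{p^3a^2}{3}H_k^{(3)}-\cdots,
$$
where $H_k^{(m)}=\sum_{i=1}^{k}i^{-m}$ and $H_k=H_k^{(1)}$ (these are $p$-integral because $k\le p-1$). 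Exponentiating and truncating modulo $p^4$ then expresses $B_k$ as $1-pH_k+\frac{p^2}{2}(H_k^2-aH_k^{(2)})+p^3(\cdots)$ plus $O(p^4)$.

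Next I treat the weight $w_k=k^r(k+a)^r$. Writing $k+a=\frac{kp+(k+1)}{p+1}$ separates its $a$-dependence into explicit powers of $p$ times integer polynomials:
$$
w_k=\frac{1}{(p+1)^r}\sum_{i=0}^{r}\binom{r}{i}p^i\,k^{r+i}(k+1)^{r-i}.
$$
Multiplying this by the expansion of $B_k$ and summing reduces the theorem to evaluating, to the appropriate power of $p$, sums of the shape $\sum_{k=1}^{p-1}Q(k)H_k^{(j_1)}H_k^{(j_2)}\cdots$, where $Q$ has degree at most $2r$. Here the hypothesis $p>2r+1$ is decisive: every exponent that occurs is at most $2r\le p-2$, so no exponent is a multiple of $p-1$, and the ``diagonal'' Bernoulli number $B_{p-3}$ that governs Theorem~\ref{theorem1} never intrudes below the modulus $p^4$.

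The core of the argument is then a package of congruences for these weighted sums. For the pure power sums one invokes the Faulhaber--Bernoulli expansion (e.g. $\sum_{k=1}^{p-1}k^m\equiv B_mp+\frac{m}{2}B_{m-1}p^2+\frac{m(m-1)}{6}B_{m-2}p^3\pmod{p^4}$ for $m\ge3$); for the harmonic-weighted sums $\sum_k k^mH_k$ (needed mod $p^3$), $\sum_k k^mH_k^{(2)}$ and $\sum_k k^mH_k^2$ (mod $p^2$), and $\sum_k k^mH_k^{(3)}$, $\sum_k k^mH_kH_k^{(2)}$, $\sum_k k^mH_k^3$ (mod $p$), one uses Abel-summation reductions together with Wolstenholme-type identities such as $H_{p-1}\equiv0$, $H_{p-1}^{(2)}\equiv0\pmod p$ and their refinements. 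Assembling these, the contributions organize by total power of $p$, and one verifies that the coefficient of each of $p^0,p^1,p^2,p^3$ in the full combination vanishes modulo $p^4$, using repeatedly that $B_m=0$ for odd $m\ge3$ and that the even-index Bernoulli contributions cancel between the weight expansion and the $H_k$-expansion.

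The main obstacle is precisely this final cancellation. The individual sums $\sum_k w_k$, $\sum_k w_kH_k$, $\sum_k w_k(H_k^2-aH_k^{(2)})$, etc. are each nonzero to the relevant power of $p$, carrying genuine Bernoulli-number contributions; only their weighted combination, dictated by the coefficients $1,-p,\frac{p^2}{2},\dots$ produced by $\exp(\log B_k)$, collapses to $0$. Handling the $a=\frac{1}{p+1}$ corrections simultaneously—so that the explicit $(p+1)^{-r}$ and $p^i$ factors coming from the weight interlock correctly with the $a^{m-1}$ factors in $\log B_k$—is where the bookkeeping is heaviest, and I expect this to be the step that most resists a short write-up.
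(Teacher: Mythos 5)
Your setup is correct as far as it goes---the logarithmic expansion of $B_k=\prod_{i=1}^k(1-pa/i)^{p+1}$, its truncation modulo $p^4$, and the decomposition $k+a=\frac{kp+(k+1)}{p+1}$ are all sound---but the proposal stops exactly where the proof has to begin. Everything you write reduces the theorem to the assertion that a large collection of weighted harmonic sums $\sum_{k=1}^{p-1}Q(k)H_k$, $\sum_k Q(k)(H_k^2-aH_k^{(2)})$, $\sum_k Q(k)H_k^3$, $\sum_k Q(k)H_kH_k^{(2)}$, $\sum_k Q(k)H_k^{(3)}$, etc., with $Q$ an essentially arbitrary polynomial of degree up to $2r$ depending on $r$, combine to give $0\pmod{p^4}$; and this cancellation is precisely what you do not prove. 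You acknowledge as much (``I expect this to be the step that most resists a short write-up''), but in your framework that cancellation \emph{is} the theorem: each individual sum carries genuine Bernoulli contributions $B_{m-1},B_{m-2},\dots$ whose closed forms vary with $r$ through $Q$, you state no lemma evaluating them, and you give no mechanism forcing the coefficients of $p^0,p^1,p^2,p^3$ to vanish simultaneously for every $r$. As it stands this is a plan for a computation, not a proof, and carrying it out for general $r$ by brute Faulhaber--Abel bookkeeping would be a serious undertaking with no guarantee of a uniform cancellation pattern. You also never identify the mod-$p$ mechanism that ultimately kills the top-order term.

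The paper sidesteps all of this with one identity you are missing. Since $k^r\bigl(k+\frac{1}{p+1}\bigr)^r$ is a polynomial in $k$ of degree $2r\le p-2$ (this is exactly where $p>2r+1$ enters), the finite-difference identity $\sum_{k=0}^{n}\binom{n}{k}(-1)^kk^m=0$ for $m\le n-1$ (Lemma \ref{lemma2}, with $n=p-1$) gives the \emph{exact} vanishing $\sum_{k=0}^{p-1}(-1)^k\binom{p-1}{k}k^r\bigl(k+\frac{1}{p+1}\bigr)^r=0$. Now $(-1)^k\binom{p-1}{k}=\prod_{j=1}^k(1-p/j)$ is the analogue of your $B_k$ with $pa/j$ replaced by $p/j$, so subtracting it absorbs, in one stroke and exactly, all of your $p^0$-, $p^1$- and $p^2$-level sums and most of the $p^3$-level ones: by Lemma \ref{lemma1} the difference satisfies $B_k-(-1)^k\binom{p-1}{k}\equiv\frac{p^3}{2}H_k^{(2)}\pmod{p^4}$. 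The whole theorem then reduces to the single congruence $\sum_{k=0}^{p-1}k^r(k+1)^rH_k^{(2)}\equiv0\pmod p$, which follows from the reflection $k\mapsto p-1-k$ together with $H_{p-1-k}^{(2)}\equiv-H_k^{(2)}\pmod p$---a symmetry your outline never invokes. To repair your argument you would either need to import this comparison-with-$\binom{p-1}{k}$ device (at which point your expansion of $B_k$ around $1$ becomes unnecessary), or actually supply and verify the full package of weighted-sum evaluations for general $r$, which is the missing and by far the hardest part.
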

\begin{theorem}\cite[Conjecture 4, (5.4)]{Guo}\label{theorem3}
Let $p>3$ be a prime. Then
$$
\sum_{k=0}^{p-1}(2pk-2k-1)\f{\l(\f{-1}{p-1}\r)_k^{2p-2}}{(k!)^{2p-2}}\eq0\pmod{p^5}.
$$
\end{theorem}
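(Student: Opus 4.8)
The plan is to begin by simplifying the summand. Since $(x)_k/k!=(-1)^k\binom{-x}{k}$, taking $x=\frac{-1}{p-1}$ gives $\frac{(\frac{-1}{p-1})_k}{k!}=(-1)^k\binom{\frac{1}{p-1}}{k}$, and because the exponent $2p-2$ is even the sign disappears. Writing $t=\frac{1}{p-1}$ and noting $2pk-2k-1=2(p-1)k-1$, the assertion becomes
$$\sum_{k=0}^{p-1}\bigl(2(p-1)k-1\bigr)\binom{t}{k}^{2(p-1)}\equiv 0\pmod{p^5}.$$
Thus it suffices to evaluate the two sums $\Sigma_1=\sum_{k=0}^{p-1}\binom{t}{k}^{2(p-1)}$ and $\Sigma_2=\sum_{k=0}^{p-1}k\binom{t}{k}^{2(p-1)}$ modulo $p^5$ and to verify that $2(p-1)\Sigma_2-\Sigma_1\equiv0$.

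Next I would expand $\binom{t}{k}^{2(p-1)}$ $p$-adically. From $\frac{t-(j-1)}{j}=\frac{1}{p-1}\bigl(1-\frac{(j-1)p}{j}\bigr)$ one obtains the product formula $\binom{t}{k}=(p-1)^{-k}\prod_{j=1}^{k}\bigl(1-\frac{(j-1)p}{j}\bigr)$, hence
$$\binom{t}{k}^{2(p-1)}=E^{k}\exp\!\Bigl(2(p-1)\sum_{j=1}^{k}\log\bigl(1-\tfrac{(j-1)p}{j}\bigr)\Bigr),\qquad E=(1-p)^{2-2p}.$$
Every factor here is a principal unit, so the $p$-adic logarithm converges. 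Expanding $E=(1-p)^{2-2p}$ (I compute $E\equiv1-2p+3p^2-3p^3+\frac{8}{3}p^4\pmod{p^5}$) and $\log(1-\frac{(j-1)p}{j})=-\sum_{n\ge1}\frac{p^n}{n}(1-\frac1j)^n$, then truncating modulo $p^5$, I would reduce $\binom{t}{k}^{2(p-1)}$ to a polynomial in $p$ whose coefficients are explicit $\Z_p$-combinations of the harmonic sums $H_k^{(r)}=\sum_{j=1}^{k}j^{-r}$ and of products such as $(H_k^{(1)})^2$, arising from $(1-\frac1j)^n=\sum_i\binom{n}{i}(-1)^ij^{-i}$.

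Substituting this expansion into $\Sigma_1$ and $\Sigma_2$ and interchanging the order of summation reduces everything to a finite list of weighted harmonic sums $\sum_{k=1}^{p-1}k^{a}\prod_i H_k^{(r_i)}\pmod{p^5}$. These would be evaluated by the standard Wolstenholme-type congruences together with the ones that introduce Bernoulli numbers, such as $H_{p-1}^{(1)}\equiv-\frac13p^2B_{p-3}$ and $H_{p-1}^{(2)}\equiv\frac23pB_{p-3}$ and the analogues for sums of the shape $\sum_k H_k^{(r)}/k^{s}$; these are precisely the Bernoulli-number ingredients already used for \eqref{eq2}.

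The main obstacle is the final step: the simultaneous evaluation of all the weighted harmonic sums to modulus $p^5$ and the verification that, after forming $2(p-1)\Sigma_2-\Sigma_1$, every contribution of positive $p$-adic order cancels. The coefficient $2pk-2k-1$ is tailored so that both the elementary polynomial-in-$p$ part (note $\sum_{k=0}^{p-1}(2(p-1)k-1)=p^3-2p^2$, which has to be cancelled) and, more delicately, every $B_{p-3}$ term cancel, leaving exactly $0$. Carefully tracking the precision lost when summing $p-1$ terms and organizing the $H_k^{(r)}$ bookkeeping so that these cancellations become transparent is where the real work lies; I expect the computation to parallel, one power of $p$ higher, the argument already carried out for Theorem~\ref{theorem1}.
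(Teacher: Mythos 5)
There is a genuine gap: your plan stops exactly where the proof has to happen, and it is missing the structural device that makes the paper's computation feasible. You propose to evaluate $\Sigma_1=\sum_{k=0}^{p-1}\binom{t}{k}^{2(p-1)}$ and $\Sigma_2=\sum_{k=0}^{p-1}k\binom{t}{k}^{2(p-1)}$ directly modulo $p^5$. But after your exp/log expansion the summand contains terms such as $p\,(k-H_k)$, $p^2(k-H_k)^2,\ldots,p^4(k-H_k)^4$, so a direct evaluation modulo $p^5$ requires high-precision congruences for weighted sums involving the \emph{first-order} harmonic numbers, e.g. $\sum_{k}k^aH_k^b$ to moduli as high as $p^4$; you neither list these nor prove them, and the crucial cancellation of all $B_{p-3}$ contributions is stated only as an expectation ("is where the real work lies"). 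The paper avoids this entirely by subtracting the comparison sequence $\prod_{j=1}^k\bigl(1-\frac{2p}{j}\bigr)=(-1)^k\binom{2p-1}{k}$, which matches the summand to first order in $p$; the residual difference then begins at $p^2H_k^{(2)}$, so only low-precision harmonic congruences (Lemmas \ref{lemma3}, \ref{lemma4}, \ref{lemma3_2}, all modulo $p^3$ or less) are needed to get \eqref{keystep1}.

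The second missing ingredient is the exact evaluation of the comparison sum. You suggest the computation should "parallel, one power of $p$ higher" the proof of Theorem \ref{theorem1}, but the analogy breaks precisely here: in Theorem \ref{theorem1} the comparison sum $\sum_{k=0}^{p-1}(-1)^kk\binom{p-1}{k}$ vanishes identically (Lemma \ref{lemma2}), whereas here one needs $\sum_{k=0}^{p-1}(2pk-2k-1)(-1)^k\binom{2p-1}{k}$, a \emph{half-range} alternating sum of $\binom{2p-1}{k}$ to which no vanishing identity applies. The paper evaluates it in closed form via the identities of Lemma \ref{lemma3_1} with $n=p-1$, reducing everything to $\binom{2p-1}{p-1}$, and then invokes Glaisher's congruence $\binom{2p-1}{p-1}\equiv1-\frac{2}{3}p^3B_{p-3}\pmod{p^4}$; the $B_{p-3}$ terms cancel between the two closed forms, yielding \eqref{keystep2}, which exactly offsets \eqref{keystep1}. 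Your preliminary algebra is sound (the reduction to $\Sigma_1,\Sigma_2$, the product formula, and $E\equiv1-2p+3p^2-3p^3+\frac{8}{3}p^4\pmod{p^5}$ are all correct), but without the subtraction trick and the Lemma \ref{lemma3_1}/Glaisher evaluation — or, alternatively, a full catalogue of the high-precision weighted harmonic sums your direct route demands — the proposal is an outline, not a proof.
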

\begin{Remark}
Via a similar discussion as the proof of Theorem \ref{theorem3}, we can also prove that \cite[Conjecture 3, (5.2)]{Guo} holds modulo $p^5$. However, it is difficult to show this conjecture holds modulo $p^7$ by using the same method since the computation is very complicated.
\end{Remark}

In the next section, we shall prove Theorem \ref{theorem1} and Theorem \ref{theorem2}. The proof of Theorem \ref{theorem3} will be given in the last section.

\section{Proof of Theorems \ref{theorem1}--\ref{theorem2}}
\setcounter{lemma}{0} \setcounter{theorem}{0}
\setcounter{equation}{0}
To show theorems \ref{theorem1}--\ref{theorem2} we need the following lemmas.
\begin{lemma}\cite{Su2}\label{lemma1}
For primes $p>5$ we have
\begin{align*}
&\f{\l(\f{1}{p+1}\r)_k^{p+1}}{(k!)^{p+1}}-\prod_{j=1}^k\l(1-\f{p}{j}\r)\\
\eq&\f{p^5-p^4+p^3}{2}H_k^{(2)}+\f{p^4-3p^5}{6}H_k^{(3)}+\f{p^5}{12}H_k^{(4)}+\f{p(p^4-p^3)}{2}\sum_{1\leq i<j\leq k}\l(\f{1}{ij^2}+\f{1}{i^2j}\r)\\
&-\f{p^5}{6}\sum_{1\leq i<j\leq k}\l(\f{1}{i^3j}+\f{1}{ij^3}\r)+\f{p^5}{2}\sum_{1\leq i_1<i_2\leq k}\f{1}{i_1i_2}\l(\sum_{j=1}^k\f{1}{j^2}-\f{1}{i_1^2}-\f{1}{i_2^2}\r)\pmod{p^6},
\end{align*}
where $H_k^{(m)}=\sum_{i=1}^{k}1/i^m$ denotes the $k$-th harmonic number of order $m$.
\end{lemma}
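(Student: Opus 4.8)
The first and only genuinely structural step is to rewrite the summand as a plain product. Writing out the Pochhammer symbol and pairing the $j$-th factor of $\left(\frac{1}{p+1}\right)_k$ with the $j$-th factor of $k!$, one checks that
$$
\frac{\left(\frac{1}{p+1}\right)_k}{k!} = \prod_{j=1}^k\frac{\frac{1}{p+1}+j-1}{j} = \prod_{j=1}^k\left(1-\frac{p}{j(p+1)}\right),
$$
since $\frac{1}{p+1}+j-1 = \frac{j+(j-1)p}{p+1}$ and $\frac{j+(j-1)p}{j(p+1)} = \frac{j(1+p)-p}{j(1+p)} = 1-\frac{p}{j(p+1)}$. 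Hence, abbreviating $a_j = 1-\frac{p}{j(p+1)}$ and $b_j = 1-\frac{p}{j}$, the left side of the claimed congruence equals $\prod_{j=1}^k a_j^{p+1}-\prod_{j=1}^k b_j$. Since $1\le j\le k\le p-1$, every $a_j$ and $b_j$ lies in $1+p\Z_p$, so the $p$-adic logarithm is available and additive on these factors, and the exponent $p+1$ becomes harmless once we pass to logarithms.

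Set $\Delta := (p+1)\sum_{j=1}^k\log a_j-\sum_{j=1}^k\log b_j$. Expanding $\log(1-x)=-\sum_{m\ge1}x^m/m$ with $x=\frac{p}{j(p+1)}$ and $x=\frac{p}{j}$ respectively and summing over $j$ gives
$$
\Delta = -\sum_{m\ge1}\frac{p^m}{m}\left(\frac{1}{(1+p)^{m-1}}-1\right)H_k^{(m)}.
$$
The $m=1$ term vanishes, and for $2\le m\le 4$ the factor $(1+p)^{-(m-1)}-1$ has $p$-adic valuation exactly $1$ (as $m-1$ is coprime to $p$), while $m\ge5$ contributes nothing modulo $p^6$. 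Expanding $(1+p)^{-(m-1)}$ by the binomial series and retaining terms through $p^5$ yields
$$
\Delta \equiv \frac{p^3-p^4+p^5}{2}H_k^{(2)} + \frac{2p^4-3p^5}{3}H_k^{(3)} + \frac{3p^5}{4}H_k^{(4)} \pmod{p^6},
$$
where $p>5$ ensures that all rational coefficients that occur are $p$-adic units.

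Because $\Delta\in p^3\Z_p$, we have $e^{\Delta}-1\equiv\Delta\pmod{p^6}$, so
$$
\prod_{j=1}^k a_j^{p+1}-\prod_{j=1}^k b_j = \left(\prod_{j=1}^k b_j\right)\left(e^{\Delta}-1\right) \equiv \left(\prod_{j=1}^k b_j\right)\Delta \pmod{p^6}.
$$
Since $\Delta$ already carries the factor $p^3$, I need $\prod_{j=1}^k b_j$ only modulo $p^3$, namely $\prod_{j=1}^k b_j\equiv 1-pH_k^{(1)}+p^2\sum_{1\le i<j\le k}\frac{1}{ij}\pmod{p^3}$. Multiplying this by the displayed $\Delta$ and discarding everything of order $p^6$ leaves an expression in $H_k^{(2)},H_k^{(3)},H_k^{(4)}$ together with the products $H_k^{(1)}H_k^{(2)}$, $H_k^{(1)}H_k^{(3)}$ and $\bigl(\sum_{i<j}\frac{1}{ij}\bigr)H_k^{(2)}$.

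Finally I convert these products into the symmetric double sums of the statement using the elementary identities
$$
H_k^{(1)}H_k^{(2)} = H_k^{(3)} + \sum_{1\le i<j\le k}\left(\frac{1}{ij^2}+\frac{1}{i^2j}\right),\qquad H_k^{(1)}H_k^{(3)} = H_k^{(4)} + \sum_{1\le i<j\le k}\left(\frac{1}{ij^3}+\frac{1}{i^3j}\right),
$$
$$
\left(\sum_{1\le i<j\le k}\frac{1}{ij}\right)H_k^{(2)} = \sum_{1\le i<j\le k}\frac{1}{ij}\left(H_k^{(2)}-\frac{1}{i^2}-\frac{1}{j^2}\right) + \sum_{1\le i<j\le k}\left(\frac{1}{ij^3}+\frac{1}{i^3j}\right),
$$
each obtained by splitting a double sum according to whether its two indices are equal, smaller, or larger. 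Substituting these and collecting the coefficients of $H_k^{(2)},H_k^{(3)},H_k^{(4)}$ and of the three double sums reproduces exactly the claimed expansion. The only real obstacle is bookkeeping: one must retain every cross term of order $p^3$ through $p^5$ when multiplying $\Delta$ by $\prod_{j=1}^k b_j$ and track the several rational coefficients through the identities without error; beyond the initial reduction and the logarithm trick, the work is routine.
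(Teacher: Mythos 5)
Your proof is correct, and I verified the endgame explicitly: with $\Delta\equiv\frac{p^3-p^4+p^5}{2}H_k^{(2)}+\frac{2p^4-3p^5}{3}H_k^{(3)}+\frac{3p^5}{4}H_k^{(4)}\pmod{p^6}$ and $\prod_{j=1}^k(1-p/j)\equiv1-pH_k+p^2\sum_{i<j}\frac{1}{ij}\pmod{p^3}$, your three splitting identities collect to the coefficients $\frac{p^3-p^4+p^5}{2}$, $\frac{2p^4-3p^5}{3}-\frac{p^4-p^5}{2}=\frac{p^4-3p^5}{6}$, $\frac{3p^5}{4}-\frac{2p^5}{3}=\frac{p^5}{12}$, $\frac{p^5-p^4}{2}$, $-\frac{2p^5}{3}+\frac{p^5}{2}=-\frac{p^5}{6}$ and $\frac{p^5}{2}$, exactly as in the statement. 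However, your route is genuinely different from the source's: the paper does not prove this lemma at all but quotes it from Sun \cite{Su2}, and the style of argument there --- visible in miniature in this paper's own proof of Theorem \ref{theorem3}, where the author handles the analogous $(2p-2)$-th power --- is a direct truncated binomial expansion of each factor $\bigl(1-\frac{p}{j(p+1)}\bigr)^{p+1}$ modulo the target prime power, followed by multiplying out the product and tracking elementary symmetric sums by hand. Your $p$-adic $\log/\exp$ linearization replaces that multiplication of $k$ polynomial factors by a single sum over $j$, makes the exponent $p+1$ and the truncation at $m=4$ transparent, and isolates the only genuinely multiplicative step in the single product $\prod_j(1-p/j)\cdot\Delta$; the cost is the appeal to $p$-adic analytic facts ($\log$ and $\exp$ being mutually inverse isomorphisms between $1+p\mathbb{Z}_p$ and $p\mathbb{Z}_p$ for odd $p$) where the source stays entirely elementary. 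Two small points you should make explicit: the lemma is implicitly restricted to $0\le k\le p-1$ (which you use when asserting each $j\le k$ is a $p$-adic unit and that the $H_k^{(m)}$ are $p$-integral), and the hypothesis $p>5$ is needed precisely so that the discarded $m=5$ term $-\frac{p^5}{5}\bigl((1+p)^{-4}-1\bigr)H_k^{(5)}$ has valuation $6$ rather than $5$ --- your remark about unit denominators covers this, but it is the one place the hypothesis actually bites.
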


\begin{lemma}\cite[pp. 125--126]{vW}\label{lemma2}
For any positive integer $n$ we have
$$
\sum_{k=0}^{n}\binom{n}{k}(-1)^kk^m=0\quad for\ all\ m=0,1,\ldots,n-1.
$$
\end{lemma}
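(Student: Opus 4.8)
The plan is to recognize the sum as, up to an overall sign, the $n$-th iterated forward difference of the monomial $x^m$ evaluated at the origin, and then to exploit the fact that differencing lowers polynomial degree. First I would introduce the forward difference operator $\Delta$, acting on functions of a (real or formal) variable by $\Delta f(x)=f(x+1)-f(x)$. A routine induction on $n$, using Pascal's rule $\binom{n}{k}=\binom{n-1}{k}+\binom{n-1}{k-1}$, gives
$$(\Delta^n f)(x)=\sum_{k=0}^n(-1)^{n-k}\binom nk f(x+k).$$
Specializing $f(x)=x^m$ and $x=0$ yields $(\Delta^n f)(0)=\sum_{k=0}^n(-1)^{n-k}\binom nk k^m$, which is exactly $(-1)^n$ times the sum in the lemma. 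Hence it suffices to prove $(\Delta^n x^m)(0)=0$ for $0\le m\le n-1$.

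Next I would establish the key degree-lowering property: if $g$ is a polynomial of degree $d\ge1$ with leading term $cx^d$, then $\Delta g$ is a polynomial of degree $d-1$, its new leading term being $cd\,x^{d-1}$ (the top-degree contributions cancel), while $\Delta$ annihilates constant polynomials. Applying this repeatedly, $\Delta^n x^m$ is obtained from a degree-$m$ polynomial by $n>m$ successive applications of $\Delta$, so it is the zero polynomial; in particular its value at $0$ vanishes, which is the claim. The case $m=0$ is immediate once one adopts the usual convention $0^0=1$, so that $x^0$ is the constant polynomial $1$ and the sum reduces to $(1-1)^n=0$.

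As an alternative route I would package the same idea through generating functions: multiplying the target sum by $t^m/m!$ and summing over all $m\ge0$ gives, by the binomial theorem,
$$\sum_{m\ge0}\left(\sum_{k=0}^n(-1)^k\binom nk k^m\right)\frac{t^m}{m!}=\sum_{k=0}^n(-1)^k\binom nk e^{kt}=(1-e^t)^n.$$
Since $1-e^t=-t+O(t^2)$ vanishes to first order at $t=0$, the function $(1-e^t)^n=(-1)^n t^n+O(t^{n+1})$ vanishes to order $n$; therefore the coefficient of $t^m$ is $0$ for every $m<n$, and comparing coefficients recovers $\sum_{k=0}^n(-1)^k\binom nk k^m=0$.

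The statement is classical, so there is no genuine obstacle here; the only point that requires care is the degree-lowering step, equivalently the fact that $(1-e^t)^n$ has a zero of order exactly $n$ at the origin. This is precisely what forces all coefficients below $t^n$ to vanish, and thus what pins down the range $m\le n-1$ in which the identity holds.
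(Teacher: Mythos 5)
Your proof is correct, but note that the paper offers no proof of this lemma at all: it is quoted directly from van Lint and Wilson \cite{vW}, where the identity arises combinatorially from inclusion--exclusion --- after the substitution $k\mapsto n-k$ the sum becomes $(-1)^n\sum_{k=0}^n(-1)^{n-k}\binom{n}{k}k^m$, and $\sum_{k=0}^n(-1)^{k}\binom{n}{k}(n-k)^m$ counts the surjections from an $m$-set onto an $n$-set (equivalently $n!\,S(m,n)$ in terms of Stirling numbers), of which there are none when $m<n$. Your route is the other classical one: identifying the sum with $(-1)^n(\Delta^n x^m)(0)$ and invoking the fact that the forward difference operator strictly lowers polynomial degree, so that $n>m$ applications annihilate $x^m$; your generating-function variant, $\sum_{k=0}^n(-1)^k\binom{n}{k}e^{kt}=(1-e^t)^n=(-1)^nt^n+O(t^{n+1})$, packages the same degree count analytically and is equally airtight. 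Each approach buys something: the combinatorial proof in \cite{vW} gives for free the boundary value $(-1)^n n!$ at $m=n$ (no extra work), while your difference-operator argument is self-contained, needs no enumerative interpretation, and --- since it is an identity of integers verified by a polynomial computation --- manifestly holds with no characteristic or convergence caveats. One small point you were right to flag: for $m=0$ the $k=0$ term involves $0^0$, and the convention $0^0=1$ is exactly what reduces the sum to $(1-1)^n=0$; also, only a zero of order \emph{at least} $n$ in $(1-e^t)^n$ is needed for the lemma, the exact order $n$ being the refinement that recovers the $m=n$ value.
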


\begin{lemma}\label{lemma3}
Let $p>3$ be a prime. Then
\begin{gather*}
\sum_{k=0}^{p-1}kH_k^{(2)}\eq-\f{p^2}{2}B_{p-3}-\f{p}{2}+\f{1}{2}\pmod{p^3},\quad\sum_{k=0}^{p-1}kH_k^{(3)}\eq\f{1}{3}pB_{p-3}\pmod{p^2},\\
\sum_{k=0}^{p-1}kH_k^{(4)}\eq0\pmod{p}.
\end{gather*}
\end{lemma}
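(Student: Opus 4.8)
The plan is to convert each sum into a combination of the harmonic numbers $H_{p-1}^{(j)}$ and then feed in classical congruences for those. First I would exchange the order of summation: writing $\sum_{k=0}^{p-1}kH_k^{(m)}=\sum_{k=1}^{p-1}k\sum_{i=1}^{k}i^{-m}$ and letting $k$ run from $i$ to $p-1$ for each fixed $i$, the inner sum is $\sum_{k=i}^{p-1}k=\f{(p-1)p}{2}-\f{i(i-1)}{2}$. Since $\f{i(i-1)}{i^m}=\f{1}{i^{m-2}}-\f{1}{i^{m-1}}$, this yields the identity
$$
\sum_{k=0}^{p-1}kH_k^{(m)}=\f{(p-1)p}{2}H_{p-1}^{(m)}-\f12 H_{p-1}^{(m-2)}+\f12 H_{p-1}^{(m-1)}\qquad(m\gs2),
$$
with the convention $H_{p-1}^{(0)}=p-1$. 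This reduces all three claims to statements about single harmonic numbers modulo suitable powers of $p$.

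Next I would collect the inputs. The two facts carrying Bernoulli numbers are Glaisher's congruence $H_{p-1}^{(1)}\eq-\f{p^2}{3}B_{p-3}\pmod{p^3}$ (which contains Wolstenholme's $H_{p-1}^{(1)}\eq0\pmod{p^2}$) and $H_{p-1}^{(2)}\eq\f{2p}{3}B_{p-3}\pmod{p^2}$. Besides these I only need the elementary vanishing $H_{p-1}^{(m)}\eq0\pmod p$ whenever $(p-1)\nmid m$ (so in particular for $m=2,3$ once $p\gs5$) and the $p$-integrality of $H_{p-1}^{(4)}$; all of these hold for $p>3$. Note that it is the explicit factor of $p$ in $\f{(p-1)p}{2}$, rather than any vanishing of $H_{p-1}^{(4)}$ itself, that handles the leading term in the third congruence, which is essential since $H_{p-1}^{(4)}\not\eq0\pmod p$ when $p=5$.

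Finally I would substitute into the identity, tracking precision term by term. For $m=2$ the factor $\f{(p-1)p}{2}$ carries one power of $p$, so $H_{p-1}^{(2)}\eq\f{2p}{3}B_{p-3}\pmod{p^2}$ gives $\f{(p-1)p}{2}H_{p-1}^{(2)}\eq-\f{p^2}{3}B_{p-3}\pmod{p^3}$, the term $\f12H_{p-1}^{(1)}$ contributes $-\f{p^2}{6}B_{p-3}$ via Glaisher, and $-\f12H_{p-1}^{(0)}=-\f p2+\f12$; adding these gives $-\f{p^2}{2}B_{p-3}-\f p2+\f12$. For $m=3$ the $H_{p-1}^{(3)}$-term dies modulo $p^2$ (a factor $p$ against $H_{p-1}^{(3)}\eq0\pmod p$), $H_{p-1}^{(1)}$ drops out by Wolstenholme, and only $\f12H_{p-1}^{(2)}\eq\f13pB_{p-3}$ survives. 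For $m=4$ every summand acquires a factor of $p$, so the sum is $\eq0\pmod p$. The only real care needed is the $p$-adic bookkeeping in the $m=2$ case: one must use the full mod-$p^3$ form of Glaisher's congruence together with the mod-$p^2$ form of $H_{p-1}^{(2)}$ so that the result is valid modulo $p^3$ and not merely a lower power. Everything else is immediate substitution.
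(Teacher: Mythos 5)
Your proof is correct and follows essentially the same route as the paper: the identity $\sum_{k=0}^{p-1}kH_k^{(m)}=\f{p^2-p}{2}H_{p-1}^{(m)}-\f12H_{p-1}^{(m-2)}+\f12H_{p-1}^{(m-1)}$ obtained by interchanging summation is exactly the paper's equation, and the inputs (Glaisher/Wolstenholme for $H_{p-1}$, the congruence $H_{p-1}^{(2)}\eq\f{2}{3}pB_{p-3}$, and the vanishing of $H_{p-1}^{(3)}$ modulo $p$) are the same ones the paper cites from Sun's Lemma 2.1. Your extra remark that the third congruence rests on the explicit factor $p$ rather than on any vanishing of $H_{p-1}^{(4)}$ (which indeed fails at $p=5$) is a nice point of care that the paper leaves implicit, but it does not change the argument.
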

\begin{proof}
For $m=2,3,4$, we have
\begin{align}\label{eqlem2.3}
\sum_{k=1}^{p-1}kH_k^{(m)}=&\sum_{k=1}^{p-1}k\sum_{j=1}^k\f{1}{j^m}=\sum_{j=1}^{p-1}\f{1}{j^m}\sum_{k=j}^{p-1}k=\sum_{j=1}^{p-1}\f{p^2-p-j^2+j}{2j^m}\notag\\
=&\f{p^2-p}{2}H_{p-1}^{(m)}-\f{1}{2}H_{p-1}^{(m-2)}+\f{1}{2}H_{p-1}^{(m-1)}.
\end{align}
In view of \cite[Lemma 2.1]{Su2}, we know that for primes $p>3$,
\begin{equation}\label{eqlem2.3'}
H_{p-1}\eq-\f{1}{3}p^2B_{p-3}\pmod{p^3},\quad H_{p-1}^{(2)}\eq\f{2}{3}pB_{p-3}\pmod{p^3},\quad H_{p-1}^{(3)}\eq0\pmod{p}.
\end{equation}
Combining \eqref{eqlem2.3'} with \eqref{eqlem2.3} we obtain Lemma \ref{lemma3}.
\end{proof}

\begin{lemma}\label{lemma4}
Let $p>3$ be a prime. Let
\begin{gather*}
\sigma_1 = \sum_{k=1}^{p-1}k\sum_{1\leq i<j\leq k}\l(\f{1}{ij^2}+\f{1}{i^2j}\r),\quad\sigma_2 = \sum_{k=1}^{p-1}k\sum_{1\leq i<j\leq k}\l(\f{1}{ij^3}+\f{1}{i^3j}\r),\\
\quad\sigma_3 = \sum_{k=1}^{p-1}k\sum_{1\leq i<j\leq k}\f{H_k^{(2)}}{ij}.
\end{gather*}
Then we have
\begin{gather*}
\sigma_1\eq-\f{1}{2}pB_{p-3}+\f{3}{4}(p-1)\pmod{p^2},\quad\sigma_2\eq\f{1}{2}B_{p-3}\pmod{p},\\
\sigma_3\eq\f{7}{8}+\f{1}{2}B_{p-3}\pmod{p}.
\end{gather*}
\end{lemma}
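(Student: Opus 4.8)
The plan is to evaluate all three sums by interchanging the order of summation, exactly as in the derivation of \eqref{eqlem2.3}. Since each inner double sum runs over $1\leq i<j\leq k$, a fixed pair $(i,j)$ contributes to every $k$ with $j\leq k\leq p-1$, and $\sum_{k=j}^{p-1}k=\tfrac{p^2-p-j^2+j}{2}$. Thus each $\sigma_i$ turns into a single or double harmonic sum weighted by $\tfrac{p^2-p-j^2+j}{2}$, and the whole task reduces to pinning down a handful of multiple harmonic sums through the congruences \eqref{eqlem2.3'}, together with the symmetric-function identity $\sum_{1\leq i<j\leq p-1}\bigl(\tfrac{1}{ij^2}+\tfrac{1}{i^2j}\bigr)=H_{p-1}H_{p-1}^{(2)}-H_{p-1}^{(3)}$ and its analogues.

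For $\sigma_1$ I would first observe that the $\tfrac{p^2}{2}$-part is annihilated modulo $p^2$, since every harmonic sum occurring here is a $p$-adic integer; hence modulo $p^2$ one is left with $-\tfrac{p}{2}\sum_{i<j}\bigl(\tfrac{1}{ij^2}+\tfrac{1}{i^2j}\bigr)$ plus $\tfrac12\sum_{i<j}(j-j^2)\bigl(\tfrac{1}{ij^2}+\tfrac{1}{i^2j}\bigr)$. The first piece vanishes modulo $p^2$ because $\sum_{i<j}\bigl(\tfrac{1}{ij^2}+\tfrac{1}{i^2j}\bigr)=H_{p-1}H_{p-1}^{(2)}-H_{p-1}^{(3)}\equiv 0\pmod p$. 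Expanding $(j-j^2)\bigl(\tfrac{1}{ij^2}+\tfrac{1}{i^2j}\bigr)=\tfrac{1}{ij}-\tfrac1i+\tfrac1{i^2}-\tfrac{j}{i^2}$ turns the second piece into four elementary double sums, each of which collapses to $H_{p-1}^{(m)}$-type quantities once the free index is summed; feeding in \eqref{eqlem2.3'} produces $-\tfrac12 pB_{p-3}+\tfrac34(p-1)$. The computation for $\sigma_2$ is identical in spirit but only modulo $p$: there $\tfrac{p^2-p-j^2+j}{2}\equiv\tfrac{j-j^2}{2}$, and after the analogous expansion one needs only $\sum_{1\leq i<j\leq p-1}\tfrac{1}{ij^2}\equiv B_{p-3}\pmod p$ (equivalently $\sum_{k=1}^{p-1}H_k/k^2\equiv B_{p-3}$), every other piece being $\equiv 0\pmod p$.

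The genuine obstacle is $\sigma_3$. Here $H_k^{(2)}$ is independent of $i,j$, so it factors out of the inner sum, and using $\sum_{1\leq i<j\leq k}\tfrac{1}{ij}=\tfrac12\bigl(H_k^2-H_k^{(2)}\bigr)$ reduces the claim modulo $p$ to $\sigma_3\equiv\tfrac12(T_1-T_2)$, where $T_1=\sum_{k=1}^{p-1}kH_k^2H_k^{(2)}$ and $T_2=\sum_{k=1}^{p-1}k\bigl(H_k^{(2)}\bigr)^2$. These weight-five sums do not telescope after a single interchange, so my plan is to exploit the reflection $k\mapsto p-k$, under which $H_{p-k}\equiv H_{k-1}$ and $H_{p-k}^{(2)}\equiv -H_{k-1}^{(2)}\pmod p$: adding $\sigma_3$ to its reflected image forces the highest-weight terms to cancel, leaving lower products of $H_{p-1}^{(m)}$ governed by \eqref{eqlem2.3'} and the known Euler-sum congruences. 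Tracking the signs and the boundary term $k=p-1$ carefully through this folding is the delicate part, and once $T_1$ and $T_2$ are determined modulo $p$ the stated value $\tfrac78+\tfrac12 B_{p-3}$ drops out.
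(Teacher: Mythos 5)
Your evaluations of $\sigma_1$ and $\sigma_2$ are correct and essentially coincide with the paper's own argument: interchange the two summations via $\sum_{k=j}^{p-1}k=\frac{p^2-p-j^2+j}{2}$, expand, and finish with \eqref{eqlem2.3'} and the Sun--Tauraso congruences $\sum_{k=1}^{p-1}H_k/k^2\equiv-\sum_{k=1}^{p-1}H_k^{(2)}/k\equiv B_{p-3}\pmod p$ from \cite{ST}. Your disposal of the $-\frac{p}{2}$-piece through the symmetric identity $\sum_{1\le i<j\le p-1}\bigl(\frac{1}{ij^2}+\frac{1}{i^2j}\bigr)=H_{p-1}H_{p-1}^{(2)}-H_{p-1}^{(3)}\equiv0\pmod p$ is correct, and indeed the four elementary sums coming from $(j-j^2)\bigl(\frac{1}{ij^2}+\frac{1}{i^2j}\bigr)$ already supply the full value $-\frac12 pB_{p-3}+\frac34(p-1)$, so that piece must contribute $0$ (your accounting here is actually cleaner than the paper's intermediate display, which attributes $-\frac12 pB_{p-3}$ to that piece while stating the same correct total).

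The gap is in $\sigma_3$, and it is structural, not a matter of ``delicate'' bookkeeping. The reduction $\sigma_3=\frac12(T_1-T_2)$ is fine, and reflection does determine $T_2=\sum_k k(H_k^{(2)})^2$: the summand picks up a net factor $-1$ (one sign from $p-k\equiv-k$, none from the square), so $2T_2\equiv 2\sum_k H_k^{(2)}/k-H_{p-1}^{(3)}$ and $T_2\equiv-B_{p-3}\pmod p$. But $T_1=\sum_k kH_k^2H_k^{(2)}$ is \emph{invariant} under the reflection: since $H_{p-k}\equiv H_{k-1}$ and $H_{p-k}^{(2)}\equiv-H_{k-1}^{(2)}$, one gets $(p-k)H_{p-k}^2H_{p-k}^{(2)}\equiv kH_{k-1}^2H_{k-1}^{(2)}\pmod p$, and expanding $H_{k-1}=H_k-1/k$, $H_{k-1}^{(2)}=H_k^{(2)}-1/k^2$ reproduces $T_1$ with the \emph{same} sign. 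Folding therefore yields $T_1\equiv T_1+L$ with $L$ a combination of lower-weight sums: the highest-weight term does not cancel, it drops out of both sides, leaving a consistency relation carrying no information about $T_1$ (the reflection $k\mapsto p-1-k$ fails in exactly the same way). The missing quantity is genuinely nontrivial: from the lemma and $T_2\equiv-B_{p-3}$ one must have $T_1\equiv\frac74\pmod p$, a constant that no parity argument can produce. What is needed is a device that lowers the weight \emph{before} any reflection, and that is what the paper does: it interchanges the order of summation so the inner sum becomes $\sum_{k=j}^{p-1}kH_k^{(2)}$, collapses it using $\sum_{k=1}^{p-1}kH_k^{(2)}\equiv\frac12\pmod p$ (Lemma \ref{lemma3}) and the exact formula $\sum_{k=1}^{j-1}kH_k^{(2)}=\frac{j^2-j}{2}H_{j-1}^{(2)}-\frac{j-1}{2}+\frac12H_{j-1}$ (the same computation as \eqref{eqlem2.3}), and then finishes the resulting weight-three sums with the congruences for $\sum_k H_k/k$, $\sum_k H_k^{(2)}/k$, $\sum_k H_k^2/k$ quoted from \cite{Su2}, \cite{SZ}, \cite{ST}. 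To repair your proof you must either adopt this weight-lowering route or import a proved evaluation of $\sum_{k=1}^{p-1}kH_k^2H_k^{(2)}\bmod p$ from elsewhere.
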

\begin{proof} In light of \eqref{eqlem2.3'} and noting that $\sum_{k=1}^{p-1}H_k/k^2\eq-\sum_{k=1}^{p-1}H_k^{(2)}/k\eq B_{p-3}\pmod{p}$ (cf. \cite[(5.4)]{ST}) we obtain that
\begin{align*}
\sigma_1 =& \sum_{1\leq i<j\leq p-1}\l(\f{1}{i^2j}+\f{1}{ij^2}\r)\f{p^2-p-j^2+j}{2}\\
\eq&-\f{p}{2}\sum_{1\leq i<j\leq p-1}\l(\f{1}{i^2j}+\f{1}{ij^2}\r)-\f{1}{2}\sum_{1\leq i<j\leq p-1}\l(\f{1}{i}+\f{j}{i^2}\r)+\f{1}{2}\sum_{1\leq i<j\leq p-1}\l(\f{1}{ij}+\f{1}{i^2}\r)\\
\eq&-\f{1}{2}pB_{p-3}-\f{1}{2}\sum_{i=1}^{p-1}\f{p-i-1}{i}-\f{1}{4}\sum_{i=1}^{p-1}\f{p^2-p-i^2-i}{i^2}+\f{H_{p-1}^2-H_{p-1}^{(2)}}{4}+\f{1}{2}\sum_{i=1}^{p-1}\f{p-i-1}{i^2}\\
\eq&-\f{1}{2}pB_{p-3}+\f{3}{4}(p-1)\pmod{p^2}.
\end{align*}
The proof of $\sigma_2\pmod{p}$ is similar. Below we consider $\sigma_3\pmod{p}$. By Lemma \ref{lemma3}, \cite[(2.7)]{Su2} and \cite[Lemma 2.1]{SZ} we arrive at
\begin{align*}
\sigma_3=&\sum_{1\leq i<j\leq p-1}\f{1}{ij}\sum_{k=j}^{p-1}kH_k^{(2)}\eq\sum_{1\leq i<j\leq p-1}\f{1}{ij}\l(\f{1}{2}-\sum_{k=1}^{j-1}kH_k^{(2)}\r)\\
\eq&-\sum_{1\leq i<j\leq p-1}\f{1}{ij}\l(\f{j^2-j}{2}H_{j-1}^{(2)}-\f{j-1}{2}+\f{1}{2}H_{j-1}\r)\\
=&-\sum_{1\leq i<j\leq p-1}\f{1}{ij}\l(\f{j^2}{2}H_j^{(2)}-\f{j}{2}H_j^{(2)}-\f{j}{2}+\f{1}{2}H_j\r)\\
\eq&\sum_{i=1}^{p-1}\f{i+1}{4}H_i^{(2)}-\f{p-1}{4}+\f{3}{4}\sum_{k=1}^{p-1}\f{H_k}{k}-\f{1}{2}\sum_{i=1}^{p-1}\f{H_{i}^{(2)}}{i}+\f{1}{2}+\f{1}{4}\sum_{i=1}^{p-1}\f{H_i^2+H_i^{(2)}}{i}\\
\eq&\f{7}{8}+\f{1}{2}B_{p-3}\pmod{p}.
\end{align*}

Now the proof of Lemma \ref{lemma4} is complete.
\end{proof}

\medskip
\noindent{\it Proof of Theorem \ref{theorem1}}. In view of Lemmas \ref{lemma1}--\ref{lemma4}, we have
\begin{align*}
&\sum_{k=0}^{p-1}k\f{\l(\f{1}{p+1}\r)_k^{p+1}}{(k!)^{p+1}}=\sum_{k=0}^{p-1}k\f{\l(\f{1}{p+1}\r)_k^{p+1}}{(k!)^{p+1}}-\sum_{k=1}^{p-1}(-1)^kk\binom{p-1}{k}\\
\eq&\f{p^5-p^4+p^3}{2}\l(-\f{p^2}{2}B_{p-3}-\f{p}{2}+\f{1}{2}\r)+\f{p^4-3p^5}{6}\cdot\f{1}{3}pB_{p-3}\\
&+\f{p(p^4-p^3)}{2}\l(-\f{1}{2}pB_{p-3}+\f{3}{4}(p-1)\r)-\f{1}{12}p^5B_{p-3}+\f{7}{16}p^5\\
\eq&\f{p^3}{4}-\f{p^4}{8}+\f{3}{16}p^5-\f{1}{36}pB_{p-3}\pmod{p^6}.
\end{align*}
This proves Theorem \ref{theorem1}.\qed

\medskip
\noindent{\it Proof of Theorem \ref{theorem2}}. The case $p=5$ can be verified directly. Now we suppose that $p>5$. Since $p>2r+1$, it is easy to see that
$$
\sum_{k=0}^{p-1}k^r\l(\f{1}{p+1}+k\r)^r(-1)^k\binom{p-1}{k}=0
$$
with the help of Lemma \ref{lemma2}. Thus by Lemma \ref{lemma1} we arrive at
\begin{align*}
&\sum_{k=0}^{p-1}k^r\l(k+\f{1}{p+1}\r)^r\cdot\f{\l(\f{1}{p+1}\r)_k^{p+1}}{(k!)^{p+1}}\\
=&\sum_{k=0}^{p-1}k^r\l(k+\f{1}{p+1}\r)^r\cdot\l(\f{\l(\f{1}{p+1}\r)_k^{p+1}}{(k!)^{p+1}}-(-1)^k\binom{p-1}{k}\r)\\
\eq&\f{p^3}{2}\sum_{k=0}^{p-1}k^r(k+1)^rH_k^{(2)}\pmod{p^4}.
\end{align*}
Noting that
$$
H_{p-1-k}^{(2)}\eq-H_k^{(2)}\pmod{p},
$$
we have
\begin{align*}
&\sum_{k=0}^{p-1}k^r(k+1)^rH_k^{(2)}=\sum_{k=0}^{p-1}(p-1-k)^r(p-k)^rH_{p-1-k}^{(2)}\\
\eq&-\sum_{k=0}^{p-1}k^r(k+1)^rH_k^{(2)}\eq0\pmod{p}.
\end{align*}
This concludes the proof of Theorem \ref{theorem2}.\qed

\section{Proof of Theorem \ref{theorem3}}.
We need the following preliminary results.
\setcounter{lemma}{0} \setcounter{theorem}{0}\setcounter{equation}{0}
\begin{lemma}\label{lemma3_1}For any nonnegative integer $n$ we have the following identities.
\begin{gather}
\label{id1}\sum_{k=0}^n(-1)^k\binom{2n+1}{k}=\f{(-1)^n(n+1)}{2n+1}\binom{2n+1}{n},\\
\label{id2}\sum_{k=0}^n(-1)^kk\binom{2n+1}{k}=\f{(-1)^n(n+1)}{2}\binom{2n+1}{n}.
\end{gather}
\end{lemma}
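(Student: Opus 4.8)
The plan is to reduce both identities to the single standard alternating partial-sum formula
\begin{equation*}
\sum_{k=0}^{m}(-1)^k\binom{N}{k}=(-1)^m\binom{N-1}{m},
\end{equation*}
which itself follows by a short induction on $m$ from Pascal's rule $\binom{N}{k}=\binom{N-1}{k}+\binom{N-1}{k-1}$, since the resulting sum telescopes. I would record this formula once and let it drive both parts; everything that remains is mechanical rewriting of factorials.

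For \eqref{id1} I would apply the partial-sum formula with $N=2n+1$ and $m=n$ to obtain $\sum_{k=0}^n(-1)^k\binom{2n+1}{k}=(-1)^n\binom{2n}{n}$. It then suffices to verify the purely algebraic identity $\binom{2n}{n}=\frac{n+1}{2n+1}\binom{2n+1}{n}$, which is immediate from $(2n+1)!=(2n+1)(2n)!$ and $(n+1)!=(n+1)\,n!$.

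For \eqref{id2} I would first invoke the absorption identity $k\binom{2n+1}{k}=(2n+1)\binom{2n}{k-1}$, and then shift the index by $j=k-1$ to get
\begin{equation*}
\sum_{k=0}^n(-1)^k k\binom{2n+1}{k}=-(2n+1)\sum_{j=0}^{n-1}(-1)^{j}\binom{2n}{j}.
\end{equation*}
Applying the partial-sum formula again, this time with $N=2n$ and $m=n-1$, turns the inner sum into $(-1)^{n-1}\binom{2n-1}{n-1}$, so the whole expression becomes $(2n+1)(-1)^n\binom{2n-1}{n-1}$. A final factorial simplification, using $(2n+1)!=(2n+1)(2n)(2n-1)!$ and $n\,(n-1)!=n!$, confirms that this equals $\frac{(-1)^n(n+1)}{2}\binom{2n+1}{n}$.

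No step poses a genuine obstacle: the identities are elementary, and the only points needing a little care are the index shift and the factorial bookkeeping in the second part, together with the trivial boundary behaviour (for $n=0$ the absorbed sum is empty, and \eqref{id2} is to be read for $n\ge 1$, which is all that the later application requires). An entirely self-contained alternative would be separate inductions on $n$ for each identity, but routing everything through the partial-sum formula keeps the computation shortest.
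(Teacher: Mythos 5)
Your proof is correct, and it takes a genuinely different---and more elementary---route than the paper. The paper's own proof feeds both sums into Zeilberger's algorithm, obtains inhomogeneous first-order recurrences for $S_1(n)$ and $S_2(n)$, and finishes by induction on $n$; that argument needs either computer algebra or a tedious hand-verification of the certificates. You instead reduce everything to the classical partial alternating-sum identity $\sum_{k=0}^{m}(-1)^k\binom{N}{k}=(-1)^m\binom{N-1}{m}$ (a one-line telescoping consequence of Pascal's rule), applying it with $(N,m)=(2n+1,n)$ for \eqref{id1}, and, after the absorption identity $k\binom{2n+1}{k}=(2n+1)\binom{2n}{k-1}$ and an index shift, with $(N,m)=(2n,n-1)$ for \eqref{id2}; the remaining factorial manipulations all check out. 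A bonus of your route is that it makes transparent a small slip in the paper's statement: \eqref{id2} as written actually fails at $n=0$, where the left-hand side is $0$ but the right-hand side is $\tfrac12$; your reading ``\eqref{id2} holds for $n\geq 1$'' is the correct one, and it is all that the application requires (there $n=p-1\geq 1$). In the paper's own proof this defect is hidden in the fact that the coefficient $-2n$ in the recurrence for $S_2$ vanishes at $n=0$, so that induction, too, really starts at $n=1$.
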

\begin{proof}
Denote the left-hand side of \eqref{id1} and \eqref{id2} by $S_1(n)$ and $S_2(n)$ respectively. Via Zeilberger's algorithm \cite{PWZ}, we find that $S_1(n)$ satisfies
$$
(-1-2n)S_1(n)+(3+2n)S_1(1+n) = -\f{(-1)^n(3+n)(7+5n)}{4(3+2n)}\binom{4+2n}{1+n}
$$
and $S_2(n)$ satisfies
$$
-2nS_2(n)+(2+2n)S_2(1+n)=-\f{(-1)^n(2+n)(3+n)(6+5n)}{2(3+2n)}\binom{3+2n}{n}.
$$
Then we can easily show these two identities by induction on $n$.
\end{proof}

\begin{lemma}\label{lemma3_2}
Let $p>3$ be a prime. Then
$$
\sum_{k=0}^{p-1}k\sum_{1\leq i<j\leq k}\f{1}{i^2j^2}\eq-\f{1}{2}B_{p-3}\pmod{p}.
$$
\end{lemma}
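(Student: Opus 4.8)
The plan is to reverse the order of summation, exactly as in the proofs of Lemmas \ref{lemma3} and \ref{lemma4}. Treating the pair $(i,j)$ as the inner variables and summing $k$ last over the range $j\leq k\leq p-1$, and using $\sum_{k=j}^{p-1}k=\f{p^2-p-j^2+j}{2}$, I would first write
$$
\sum_{k=1}^{p-1}k\sum_{1\leq i<j\leq k}\f{1}{i^2j^2}=\f12\sum_{1\leq i<j\leq p-1}\f{p^2-p-j^2+j}{i^2j^2}.
$$
Every index with $1\leq i<j\leq p-1$ is invertible modulo $p$, so the double sum $\sum_{1\leq i<j\leq p-1}1/(i^2j^2)$ is a $p$-adic integer and the $p^2-p$ contribution disappears modulo $p$. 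Hence
$$
\sum_{k=1}^{p-1}k\sum_{1\leq i<j\leq k}\f{1}{i^2j^2}\eq-\f12\sum_{1\leq i<j\leq p-1}\f{1}{i^2}+\f12\sum_{1\leq i<j\leq p-1}\f{1}{i^2j}\pmod{p}.
$$

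Next I would evaluate the two remaining double sums. For the first, summing over the admissible $j$ for each fixed $i$ gives $\sum_{1\leq i<j\leq p-1}1/i^2=(p-1)H_{p-1}^{(2)}-H_{p-1}$, which is $\eq0\pmod{p}$ since $H_{p-1}\eq H_{p-1}^{(2)}\eq0\pmod{p}$ by \cite[Lemma 2.1]{Su2}. For the second, fixing $j$ and summing over $i<j$ turns it into $\sum_{j=1}^{p-1}H_{j-1}^{(2)}/j$; replacing $H_{j-1}^{(2)}$ by $H_j^{(2)}-1/j^2$ gives $\sum_{j=1}^{p-1}H_j^{(2)}/j-H_{p-1}^{(3)}$, and since $H_{p-1}^{(3)}\eq0\pmod{p}$ this is congruent to $\sum_{j=1}^{p-1}H_j^{(2)}/j\eq-B_{p-3}\pmod{p}$, the congruence already invoked in the proof of Lemma \ref{lemma4} (cf. \cite[(5.4)]{ST}).

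Combining the two pieces gives $0+\f12(-B_{p-3})=-\f12B_{p-3}$ modulo $p$, as desired. Once the summation order is reversed the argument is pure bookkeeping, so I do not expect a genuine obstacle here; the only substantive inputs are the vanishing of $H_{p-1},H_{p-1}^{(2)},H_{p-1}^{(3)}$ modulo $p$ and the value of $\sum_{j=1}^{p-1}H_j^{(2)}/j$. One could instead start from the symmetric-function identity $2\sum_{1\leq i<j\leq k}1/(i^2j^2)=(H_k^{(2)})^2-H_k^{(4)}$ together with Lemma \ref{lemma3}'s evaluation of $\sum_k kH_k^{(4)}$, but that route still requires $\sum_{k=1}^{p-1}k(H_k^{(2)})^2\pmod{p}$, which is less immediate; hence the direct reversal looks preferable.
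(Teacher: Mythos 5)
Your proposal is correct and follows essentially the same route as the paper: reverse the order of summation, use $\sum_{k=j}^{p-1}k=\f{p^2-p-j^2+j}{2}$, discard the $p^2-p$ part modulo $p$, and evaluate the surviving double sums via $H_{p-1}\eq H_{p-1}^{(2)}\eq H_{p-1}^{(3)}\eq0\pmod p$ and the congruence $\sum_{k=1}^{p-1}H_k^{(2)}/k\eq-B_{p-3}\pmod p$ from \cite[(5.4)]{ST}. The only cosmetic difference is that you evaluate $\sum_{1\leq i<j\leq p-1}1/(i^2j)$ by fixing $j$ first (obtaining $\sum_j H_{j-1}^{(2)}/j$), whereas the paper fixes $i$ first (obtaining $-\sum_i H_i/i^2\eq-B_{p-3}$); both rest on the same cited identity.
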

\begin{proof} Noting \eqref{eqlem2.3'} and \cite[(5.4)]{ST} we have
\begin{align*}
&\sum_{k=0}^{p-1}k\sum_{1\leq i<j\leq k}\f{1}{i^2j^2}=\sum_{1\leq i<j\leq p-1}\f{1}{i^2j^2}\sum_{k=j}^{p-1}k=\sum_{1\leq i<j\leq p-1}\f{1}{i^2j^2}\f{p^2-p-j^2+j}{2}\\
\eq&\sum_{1\leq i<j\leq p-1}\f{-j^2+j}{2i^2j^2}\eq\sum_{i=1}^{p-1}\f{1}{i^2}\sum_{j=i+1}^{p-1}\f{1}{j}\eq-\sum_{i=1}^{p-1}\f{H_i}{2i^2}\eq-\f{1}{2}B_{p-3}\pmod{p}.
\end{align*}
\end{proof}

\medskip
\noindent{\it Proof of Theorem \ref{theorem3}}. One can directly check that Theorem \ref{theorem3} holds for $p=5$. Below we assume that $p>5$. For each $k=\{1,\ldots,p-1\}$, we have
\begin{align*}
&\f{\l(\f{-1}{p-1}\r)_k^{2p-2}}{(k!)^{2p-2}}=\prod_{j=1}^k\l(\f{\f{1}{p-1}-j+1}{j}\r)^{2p-2}=\prod_{j=1}^k\l(1-\f{p}{(p-1)j}\r)^{2p-2}\\
\eq&\prod_{j=1}^k\l(1-\f{2p}{j}+\binom{2p-2}{2}\f{p^2}{(p-1)^2j^2}-\binom{2p-2}{3}\f{p^3}{(p-1)^3j^3}+\binom{2p-2}{4}\f{p^4}{(p-1)^4j^4}\r)\\
\eq&\prod_{j=1}^k\l(1-\f{2p}{j}+\f{p^4+p^3+3p^2}{j}-\f{10p^4+12p^3}{3j^3}+\f{5p^4}{j^4}\r)\pmod{p^5}
\end{align*}
and so
\begin{align*}
&\f{\l(\f{-1}{p-1}\r)_k^{2p-2}}{(k!)^{2p-2}}-\prod_{j=1}^k\l(1-\f{2p}{j}\r)\\
\eq&(p^4+p^3+3p^2)H_k^{(2)}-\f{10p^4+12p^3}{3}H_k^{(3)}+5p^4H_k^{(4)}\\
&+12p^4\sum_{1\leq i<j\leq k}\f{1}{ij}\l(H_k^{(2)}-\f{1}{i^2}-\f{1}{j^2}\r)+9p^4\sum_{1\leq i<j\leq k}\f{1}{i^2j^2}\\
&-(2p^4+6p^3)\sum_{1\leq i<j\leq k}\l(\f{1}{ij^2}+\f{1}{i^2j}\r)+8p^4\sum_{1\leq i<j\leq k}\l(\f{1}{ij^3}+\f{1}{i^3j}\r)\pmod{p^5}.
\end{align*}
Thus in view of Lemma \ref{lemma3}--\ref{lemma4}, Lemma \ref{lemma3_2} and \cite[Lemmas 2.1\&2.2]{Su2}, we obtain that
\begin{align}\label{keystep1}
\sum_{k=0}^{p-1}(2pk-2k-1)\l(\f{\l(\f{-1}{p-1}\r)_k^{2p-2}}{(k!)^{2p-2}}-(-1)^k\binom{2p-1}{k}\r)\eq-8p^4-4p^3-3p^2\pmod{p^5}.
\end{align}
In 1990, Glaisher \cite{G1,G2} showed that
$$
\binom{2p-1}{p-1}\eq1-\f{2}{3}p^3B_{p-3}\pmod{p^4}.
$$
This together with Lemma \ref{lemma3_1} gives that
\begin{align}\label{keystep2}
&\sum_{k=0}^{p-1}(2pk-2k-1)(-1)^k\binom{2p-1}{k}\notag\\
=&p(p-1)\binom{2p-1}{p-1}-\f{p}{2p-1}\binom{2p-1}{p-1}\notag\\
\eq&8p^4+4p^3+3p^2\pmod{p^5}.
\end{align}
Combining \eqref{keystep1} and \eqref{keystep2} we immediately obtain Theorem \ref{theorem3}.\qed

\begin{Acks}
The author would like to thank Dr. Guo-Shuai Mao for his helpful comments.
\end{Acks}

\end{document}